\newtheorem{theorem}{Theorem}
\newcommand{\bt}{\begin{theorem}}
\newcommand{\et}{\end{theorem}}
\newtheorem{lemma}{Lemma}
\newcommand{\Rn}{\ensuremath{ \mathbf{R}^n }}
\newcommand{\beq}{\begin{equation}}
\newcommand{\eeq}{\end{equation}}
\newcommand{\benum}{\begin{enumerate}}
\newcommand{\eenum}{\end{enumerate}}
\newtheorem{problem}{Problem}
\newcommand{\bprob}{\begin{problem}}
\newcommand{\eprob}{\end{problem}}
\newcommand{\R}{\ensuremath{\mathbf R}}
\newcommand{\C}{\ensuremath{\mathbf C}}
\title[A generalised Hermite-Sylvester theorem]
{Real-rooted polynomials and a generalised Hermite-Sylvester theorem}
\author{Melvyn B. Nathanson}
\address{Department of Mathematics\\Lehman College (CUNY)\\Bronx, NY 10468 USA}
\email{melvyn.nathanson@lehman.cuny.edu}
\subjclass[2010]{05C31, 11C08, 11E20, 11E76, 15A15, 65H04.}
\keywords{Hermite-Sylvester criterion, real-rooted polynomials, positive semidefinite forms.}
\thanks{Supported in part by a grant from the PSC-CUNY Research Award Program.}
\date{\today}
\begin{document}

\maketitle

\begin{abstract}
A polynomial is \emph{real-rooted} if all of its roots are real.  
For every polynomial $f(t) \in {\mathbf R}[t]$, 
the Hermite-Sylvester theorem associates a quadratic form $\Phi_2$ such that $f(t)$  is real-rooted 
if and only if $\Phi_2$ is positive semidefinite.   In this note, for every positive integer $m$, an $m$-adic 
form $\Phi_{2m}$ is constructed such that $f(t)$  is real-rooted 
if and only if $\Phi_{2m}$ is positive semidefinite for some  positive integer $m$ 
if and only if $\Phi_{2m}(x_1,\ldots, x_n)$ is positive semidefinite for every  positive integer $m$.  
\end{abstract}

A polynomial is \emph{real-rooted} if all of its roots are real. 
A real-valued function $\Phi(x_1,\ldots, x_n)$ is \emph{positive semidefinite}\index{positive semidefinite}  
if  $\Phi(x_1,\ldots, x_n) \geq 0$ for all vectors $(x_1,\ldots, x_n) \in \Rn$.  
An \emph{$n$-ary $m$-adic form} is a polynomial in $n$ variables that is homogeneous 
of degree $m$.  

Let $f(t)$ be a polynomial of degree $n \geq 1$ with real coefficients, 
and  let $\lambda_1,\ldots, \lambda_n$ be the (not necessarily distinct) roots of $f(t)$.   
Hermite and Sylvester~\cite{nath2021-188,vond16}  proved that the polynomial 
$f(t)$ is real-rooted 
if and only if the quadratic form in $n$ variables
\[
\Phi_{2}(x_1,\ldots, x_n) 
= \sum_{j_1=1}^n  \sum_{j_2=1}^n  \sum_{\ell=1}^n \lambda_{\ell}^{j_1+j_2-2} x_{j_1} x_{j_2} 
\]
is positive semidefinite. 
This note generalises the Hermite-Sylvester theorem. 
We define, for every positive integer $m$, the following $n$-ary $m$-adic form: 
 \beq                                       \label{HSN:Phi}    
\Phi_{m}(x_1,\ldots, x_n)      
 = \sum_{j_1=1}^n  \sum_{j_2=1}^n \cdots  \sum_{j_{m} =1}^n 
\sum_{\ell=1}^n \lambda_{\ell}^{ \sum_{t=1}^{m} ( j_t -1) } x_{j_1} x_{j_2} \cdots x_{j_{m}}.       
\eeq
We shall prove that the the polynomial $f(t)$  is real-rooted if, and only if, 
$\Phi_{m}(x_1,\ldots, x_n)$ is positive semidefinite for all positive even integers $m$.

\begin{lemma}                           \label{HSN:lemma:Phi}
For all positive integers $m$, 
\beq                           \label{HSN:f}
\Phi_{m}(x_1,\ldots, x_n)  = \sum_{\ell=1}^n p(\lambda_{\ell})^{m}                      
\eeq
where
\beq                           \label{HSN:p}
p(t) = \sum_{j=1}^n x_j t^{j-1}.
\eeq
\end{lemma}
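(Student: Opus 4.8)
The plan is to interchange the order of the finite summations so that the sum over the root index $\ell$ sits outermost, and then to recognise the remaining $m$-fold sum as an $m$-th power. First I would exploit the fact that the exponent $\sum_{t=1}^m (j_t-1)$ in \eqref{HSN:Phi} is an ordinary sum of integers, so that the corresponding power of $\lambda_\ell$ factors:
\[
\lambda_\ell^{\sum_{t=1}^m (j_t-1)} = \prod_{t=1}^m \lambda_\ell^{j_t-1}.
\]
Pulling the sum over $\ell$ to the front and grouping the $j_t$-dependent factors, the monomial $x_{j_1}\cdots x_{j_m}$ combines with $\prod_{t=1}^m \lambda_\ell^{j_t-1}$ to give $\prod_{t=1}^m \bigl( \lambda_\ell^{j_t-1} x_{j_t}\bigr)$.

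The heart of the argument is the distributivity identity: for each fixed $\ell$, the $m$-fold sum of a product whose factors each depend on a single index collapses into a product of $m$ single sums,
\[
\sum_{j_1=1}^n \cdots \sum_{j_m=1}^n \prod_{t=1}^m \bigl( \lambda_\ell^{j_t-1} x_{j_t}\bigr)
= \prod_{t=1}^m \left( \sum_{j=1}^n \lambda_\ell^{j-1} x_j\right) = p(\lambda_\ell)^m,
\]
where the last equality is the definition \eqref{HSN:p} of $p(t)$ evaluated at $t=\lambda_\ell$. Summing the resulting expression over $\ell$ from $1$ to $n$ produces \eqref{HSN:f}.

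I do not expect any genuine obstacle here; the only step requiring care is the distributivity identity above, which is the standard fact that an $m$-fold product of finite sums equals the sum over all index $m$-tuples of the products of the summands. Because every factor ranges over the same index set $1 \le j_t \le n$ and has the identical form $\lambda_\ell^{j-1} x_j$, the product of the $m$ single sums is exactly $p(\lambda_\ell)^m$, which closes the proof. One could alternatively phrase the whole computation as a single application of the distributive law to $\left( \sum_{j=1}^n \lambda_\ell^{j-1} x_j \right)^m$ followed by a summation over $\ell$, arriving at the same conclusion.
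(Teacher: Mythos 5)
Your proposal is correct and follows essentially the same route as the paper's own proof: factor $\lambda_\ell^{\sum_t (j_t-1)}$ into $\prod_t \lambda_\ell^{j_t-1}$, move the sum over $\ell$ outside, and collapse the $m$-fold sum into $p(\lambda_\ell)^m$ by the distributive law. No gaps; the argument is complete as stated.
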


\begin{proof}
We have 
\begin{align*}                              
\Phi_{m}(x_1,\ldots, x_n)        
& = \sum_{j_1=1}^n  \sum_{j_2=1}^n \cdots  \sum_{j_{m} =1}^n 
\sum_{\ell=1}^n \lambda_{\ell}^{ \sum_{t=1}^{m} ( j_t -1) } x_{j_1} x_{j_2} \cdots x_{j_{m}}      \\
& =  \sum_{j_1=1}^n  \sum_{j_2=1}^n \cdots  \sum_{j_{m} =1}^n 
\sum_{\ell=1}^n  \prod_{t=1}^{m}  x_{j_t}  \lambda_{\ell}^{  j_t-1}      \\
& = \sum_{\ell=1}^n  \sum_{j_1=1}^n  \sum_{j_2=1}^n \cdots  \sum_{j_{m} =1}^n 
 \prod_{t=1}^{m}  x_{j_t}  \lambda_{\ell}^{  j_t-1}      \\
& = \sum_{\ell=1}^n \left( \sum_{j=1}^n x_j \lambda_{\ell}^{j-1} \right)^{m}          \\
& = \sum_{\ell=1}^n p(\lambda_{\ell})^{m}                     
\end{align*}
where $p(t)$ is the polynomial defined by~\eqref{HSN:p}. 
\end{proof}

\begin{lemma}                   \label{perm:lemma:Lagrange-1}
Let $\Lambda  = \{\lambda_1,\lambda_2,\lambda_3,\ldots, \lambda_r\}$ 
be a nonempty finite set of $r$ complex numbers that 
is closed under complex conjugation.  
Suppose that $\lambda_1 \in \Lambda$ is not real, and that $\lambda_2 = \overline{\lambda_1} \in \Lambda$.  
Let $\omega \in \C$ with complex comjugate $\overline{\omega}$.  
There exists a polynomial $p(t) \in {\mathbf R}[t]$ of degree at most $r - 1$ 
such that 
\begin{equation}                    \label{perm:Lagrange-1}
p(\lambda_1) = \omega \qquad p(\lambda_2) = \overline{\omega}
\end{equation}
and
\begin{equation}                    \label{perm:Lagrange-2}
p(\lambda_j) = 0 \qquad \text{for all $j \in \{3,4,\ldots, r\}$.}
\end{equation}
\end{lemma}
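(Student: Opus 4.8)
The plan is to build $p(t)$ by ordinary Lagrange interpolation over $\C$ and then exploit the conjugation symmetry of the prescribed data to force the coefficients to be real. Since $\Lambda$ is a set of $r$ \emph{distinct} complex numbers, there is a unique polynomial $q(t) \in \C[t]$ of degree at most $r-1$ taking any prescribed values at the nodes $\lambda_1,\ldots,\lambda_r$. With the target values $\omega, \overline{\omega}, 0, \ldots, 0$ I would write it explicitly as
\[
q(t) = \omega \prod_{k \neq 1} \frac{t - \lambda_k}{\lambda_1 - \lambda_k} + \overline{\omega} \prod_{k \neq 2} \frac{t - \lambda_k}{\lambda_2 - \lambda_k},
\]
where each product runs over $1 \le k \le r$ omitting the indicated index. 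By construction $q$ satisfies \eqref{perm:Lagrange-1} and \eqref{perm:Lagrange-2}, so the only remaining task is to prove that $q(t) \in \R[t]$.

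To that end I would introduce the conjugate polynomial $q^*(t)$, obtained from $q$ by replacing each coefficient with its complex conjugate, which satisfies the identity $q^*(z) = \overline{q(\overline{z})}$ for every $z \in \C$. The heart of the argument is to verify that $q^*$ satisfies \emph{exactly the same} interpolation conditions as $q$. Using $\overline{\lambda_1} = \lambda_2$ and $\overline{\lambda_2} = \lambda_1$, one computes $q^*(\lambda_1) = \overline{q(\lambda_2)} = \overline{\overline{\omega}} = \omega$ and $q^*(\lambda_2) = \overline{q(\lambda_1)} = \overline{\omega}$. For the remaining indices the key point is that $\Lambda$ is closed under conjugation, so for each $j \in \{3,\ldots,r\}$ the conjugate $\overline{\lambda_j}$ is again some node $\lambda_k \in \Lambda$; since $\lambda_1$ and $\lambda_2$ already form a conjugate pair and the $\lambda$'s are distinct, this index $k$ must again lie in $\{3,\ldots,r\}$, whence $q^*(\lambda_j) = \overline{q(\lambda_k)} = \overline{0} = 0$.

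Because $q$ and $q^*$ both have degree at most $r-1$ and agree at the $r$ distinct points $\lambda_1,\ldots,\lambda_r$, the uniqueness of the Lagrange interpolant forces $q^* = q$; this says precisely that every coefficient of $q$ equals its own conjugate, i.e. $q \in \R[t]$. Setting $p = q$ then completes the proof. I expect the main obstacle to be the bookkeeping in the middle paragraph — specifically, checking that conjugation maps the index set $\{3,\ldots,r\}$ into itself and cannot accidentally land on index $1$ or $2$ — since that is exactly the step where the hypotheses $\lambda_2 = \overline{\lambda_1}$ and the conjugation-closedness of $\Lambda$ are genuinely used. Everything else reduces to the standard uniqueness statement for polynomial interpolation.
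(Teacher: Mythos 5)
Your proof is correct. Every step checks out: the Lagrange interpolant $q$ of degree at most $r-1$ exists and is unique because the $r$ elements of $\Lambda$ are distinct (in particular $\lambda_1 \neq \lambda_2$ since $\lambda_1$ is not real); the identity $q^*(z) = \overline{q(\overline{z})}$ is the right tool; and you correctly handle the one delicate point, namely that conjugation permutes the index set $\{3,\ldots,r\}$ (if $\overline{\lambda_j} = \lambda_1$ for some $j \geq 3$, then $\lambda_j = \lambda_2$, contradicting distinctness, and symmetrically for $\lambda_2$), so $q^*$ satisfies the same interpolation data as $q$ and uniqueness forces $q = q^*$, i.e.\ $q \in \R[t]$.

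It is worth noting that the paper itself gives no self-contained argument here: its entire ``proof'' is a citation to Nathanson's Mathematical Gazette paper, where the lemma is proved in the special case $\omega = i$, together with the remark that the argument extends verbatim to arbitrary $\omega$. Your write-up therefore supplies exactly the detail the paper delegates to a reference. The method — Lagrange interpolation followed by a conjugation-symmetry and uniqueness argument to force real coefficients — is the natural one and is surely what the cited proof does (the lemma's very label refers to Lagrange), so you have not taken a genuinely different route; you have simply made the omitted argument explicit, and in the general-$\omega$ form rather than deducing it from the case $\omega = i$. One small observation: your argument uses only that the values prescribed at conjugate nodes are themselves conjugate ($\omega$ at $\lambda_1$, $\overline{\omega}$ at $\lambda_2$, and $0$ at the self-conjugate-paired remaining nodes), so the same proof gives the more general statement where arbitrary conjugate-symmetric data are prescribed on all of $\Lambda$ — a flexibility the application in Theorem~\ref{HSN:theorem:generalised} does not need, but which costs nothing.
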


\begin{proof}
This was proved in Nathanson~\cite{nath2021-188} in the special case $\omega = i$, 
but the proof applies unchanged for every complex number $\omega$.  
\end{proof}

\bt            \label{HSN:theorem:generalised}
The following are equivalent:
\benum
\item[(a)]
The polynomial $f(t)$ has real roots.
\item[(b)]
The $n$-ary $m$-adic form $\Phi_{m}(x_1,\ldots, x_n)$ is positive semidefinite 
for all positive even integers $m$.
\item[(c)] 
The $n$-ary $m$-adic form $\Phi_{m}(x_1,\ldots, x_n)$ is positive semidefinite 
for some positive even integer $m$.
\eenum
\et

\begin{proof}
Let $m$ be a  positive even integer.  
Let $\Phi_{m}$ be the $n$-ary $m$-adic form defined by~\eqref{HSN:Phi}, 
and let  $p(t)$ be the polynomial defined by~\eqref{HSN:p}.  By Lemma~\ref{HSN:lemma:Phi}, 
\[                    
\Phi_{m}(x_1,\ldots, x_n) = \sum_{\ell=1}^n p(\lambda_{\ell})^{m}.         
\]
For all vectors $(x_1,\ldots, x_n) \in \Rn$, the polynomial $p(t)$ has real coefficients. 
Because $m$ is even, if the root $\lambda_{\ell}$ of the polynomial $f(t)$ is real, then $p(\lambda_{\ell})^{m} \geq 0$.  
It follows that if  every root of the polynomial $f(t)$ is real,
then the form $\Phi_{m}$ is positive semi-definite for all positive  even integers $m$.   
Thus, (a) implies (b).   Also, (b) implies (c).

Suppose that $f(t)$ has a non-real root.  
The non-real roots of a polynomial with real coefficients occur in complex conjugate pairs.  
Renumbering the roots $\lambda_1,\ldots, \lambda_n$ of $f(t)$, 
we can assume that 
$\lambda_1,\ldots, \lambda_r$ are the distinct roots of $f(t)$, that 
$\lambda_1$ is non-real, and that $\lambda_2 = \overline{\lambda_1}$.   
 If $\mu$ is the multiplicity of the root $\lambda_1$, 
then $\mu$ is  also the multiplicity of the root $\lambda_2$. 

Let $m$ be a positive even integer.  
The method of Lemma 2 of Nathanson~\cite{nath2021-188}  
produces a polynomial  $p(t) = \sum_{j=1}^n x_j t^{j-1} \in \R[t]$  of degree at most $r-1 \leq n-1$ such that 
\[
p(\lambda_1) = e^{\pi i/m},  \qquad p(\lambda_2) = e^{-\pi i/m}, 
\]
and
\[
p(\lambda_{\ell}) = 0 \qquad\text{for $\ell = 3,4,\ldots, r$.}
\]
The polynomial $p(t)$ and the vector $(x_1,\ldots, x_n) \in \Rn$ depend on $m$.  
We have 
\[
p(\lambda_1)^{m} = p(\lambda_2)^{m} = -1 
\]
and 
\begin{align*}
\Phi_{m}(x_1,\ldots, x_n) 
&  = \sum_{\ell=1}^n p(\lambda_{\ell})^{m}  \\
& = \mu p(\lambda_1)^{m} + \mu p(\lambda_2)^{m} \\
&  = -2\mu  < 0.
\end{align*}
Thus, if $f(t)$ has a non-real root, then the form $\Phi_{m}$ is not positive semidefinite.  
Equivalently, if  $\Phi_{m}$ is positive semidefinite for some  positive even integer $m$, 
then the polynomial $f(t)$ is real-rooted.  
Thus,  (c) implies (a).  
This completes the proof.  
\end{proof}

Here is an example.  
Let $f(t)$ be a quadratic polynomial with roots $\lambda_1$ and $\lambda_2$. 
We have $p(t) = x_1 + x_2t$  and, for every positive integer $m$,  
the   binary $m$-adic form 
\begin{align*}
\Phi_{m}(x_1,x_2) 
& = p(\lambda_1)^m + p(\lambda_2)^m  = (x_1+ \lambda_1x_2)^{m} + (x_1 + \lambda_2 x_2)^{m} \\
 & = \sum_{k=0}^{m}   \binom{m}{k} \left(  \lambda_1^k  + \lambda_2^k \right) x_1^{m-k}x_2^k.  
\end{align*}
For the polynomial  $f(t) = t^2-1$ with real roots $\lambda_1 = 1$ and $\lambda_2 = -1$, 
we have 
\[
 \lambda_1^k  + \lambda_2^k = \begin{cases}
 2 & \text{if $k \equiv 0 \pmod{2}$} \\
0 & \text{if $k \equiv 1 \pmod{2}$} \\
 \end{cases}
\]
and 
\[
\Phi_{m}(x_1,x_2) 
 = 2 \sum_{\substack{k=0 \\ k \equiv 0 \pmod{2}}}^{m} \binom{m}{k} x_1^{m-k}x_2^{k}.
\]
Thus, 
\begin{align*}
\Phi_1(x_1,x_2) &  =  2x_1   \\
\Phi_2(x_1,x_2) & = 2\left(  x_1^2+ x_2^2  \right) \\
\Phi_3(x_1,x_2) &  =  2x_1 \left(  x_1^2+ 3 x_2^2  \right) \\
\Phi_4(x_1,x_2)
& = 2 \left(   x_1^4 + 6 x_1^2  x_2^2 + x_2^4 \right)  \\
\Phi_5(x_1,x_2) &  =  2x_1 \left( x_1^4+ 10x_1^2 x_2^2+ 5x_2^4  \right) \\
\Phi_6(x_1,x_2) & = 2 \left(  x_1^6 + 15 x_1^4  x_2^2  + 15 x_1^2  x_2^4  + x_2^6 \right).
%\Phi_7(x_1,x_2) &  =  2x_1 \left( x_1^6+ 21x_1^4 x_2^2+ 35 x_1^2x_2^4  +7x_2^6\right).
\end{align*}

For the polynomial  $t^2+t+1$ with nonreal roots $\lambda_1 = -1/2 + i\sqrt{3}/2$ 
and $\lambda_2 = -1/2 - i\sqrt{3}/2$, we have \[
 \lambda_1^k  + \lambda_2^k = \begin{cases}
 2 & \text{if $k \equiv 0 \pmod{3}$} \\
-1 & \text{if $k \equiv \pm1 \pmod{3}$} \\
 \end{cases}
\]
and 
\[
\Phi_{m}(x_1,x_2) 
 = 2 \sum_{\substack{k=0 \\ k \equiv 0 \pmod{3}}}^{m} \binom{m}{k} x_1^{m-k}x_2^{k} 
 -  \sum_{\substack{k=0 \\ k \equiv \pm 1\pmod{3}}}^{m} \binom{m}{k} x_1^{m-k}x_2^{k}.
\]
Thus, 
\begin{align*} 
\Phi_1(x_1,x_2) &  = 2x_1-x_2  \\
\Phi_2(x_1,x_2) &  = 2 x_1^2 - 2x_1x_2  - x_2^2 \\ 
\Phi_3(x_1,x_2) &  =  2x_1^3-3x_1^2x_2-3x_1x_2^2+2x_2^3 \\
\Phi_4(x_1,x_2) &  = 2x_1^4-4x_1^3x_2-6x_1^2x_2^2+8x_1x_2^3-x_2^4\\ 
\Phi_5(x_1,x_2) &  = 2x_1^5-5x_1^4x_2-10x_1^3x_2^2+20x_1^2x_2^3-5x_1x_2^4-x_2^5 \\
\Phi_6(x_1,x_2) & = 2x_1^6-6x_1^5x_2-15x_1^4x_2^2+40x_1^3x_2^3-15x_1^2x_2^4-6x_1x_2^5+2x_2^6.
%\Phi_7(x_1,x_2) & = 2x^7 - 7x^6y - 21x^5y^2 + 70x^4y^3 - 35x^3y^4 - 21x^2y^5 + 14xy^6 - y^7.
\end{align*}

Theorem~\ref{HSN:theorem:generalised} suggests the following problems.

\bprob
Let $f(t) \in \R[t]$ be a polynomial of degree $n$, and let $(\Phi_{m})_{m=1}^{\infty}$ 
be the sequence of $n$-ary $m$-adic forms defined by~\eqref{HSN:Phi}.
What does the subsequence $(\Phi_{2m+1})_{m=0}^{\infty}$ of forms  
of odd degree tell us about the polynomial $f(t)$?
\eprob

\bprob
Does the sequence of polynomials $(\Phi_m)_{m=1}^{\infty}$ constructed by~\eqref{HSN:Phi} 
satisfy a nice recursion relation? 
\eprob

\bprob
Let $f(t)$ be a quadratic polynomial. 
Is there a binary cubic form $\Psi_{3}(x_1,x_2)$ 
associated with  $f(t)$ such that $\Psi_{3}$ determines if $f(t)$ has real roots? 
\eprob

\bprob
More generally, given a polynomial $f(t)$ of degree $n$, 
does there exist a homogeneous $n$-ary form  $\Psi_{2m+1}$ of odd degree $2m+1$ 
associated with the polynomial such that the form $\Psi_{2m+1}$ determines if the polynomial has real roots? 
\eprob

\def\cprime{$'$} \def\cprime{$'$} \def\cprime{$'$}
\providecommand{\bysame}{\leavevmode\hbox to3em{\hrulefill}\thinspace}
\providecommand{\MR}{\relax\ifhmode\unskip\space\fi MR }
\providecommand{\MRhref}[2]{%
  \href{http://www.ams.org/mathscinet-getitem?mr=#1}{#2}
}
\providecommand{\href}[2]{#2}

\end{document}